\newtheorem{theorem}{Theorem}[section]
\newtheorem{conjecture}[theorem]{Conjecture}
\newtheorem{definition}[theorem]{Definition}
\newtheorem{problem}[theorem]{Problem}
\newtheorem{remark}[theorem]{Remark}
\newtheorem{question}[theorem]{Question}
\title{This is the title}
\begin{document}
\hrule\hrule\hrule\hrule\hrule
\vspace{0.3cm}	
\begin{center}
{\bf{ABSOLUTELY SUMMING MORPHISMS BETWEEN HILBERT C*-MODULES AND MODULAR PIETSCH FACTORIZATION PROBLEM}}\\
\vspace{0.3cm}
\hrule\hrule\hrule\hrule\hrule
\vspace{0.3cm}
\textbf{K. MAHESH KRISHNA}\\
Post Doctoral Fellow \\
Statistics and Mathematics Unit\\
Indian Statistical Institute, Bangalore Centre\\
Karnataka 560 059, India\\
Email: kmaheshak@gmail.com\\

Date: \today
\end{center}

\hrule\hrule
\vspace{0.5cm}
\textbf{Abstract}: Motivated from the theory of Hilbert-Schmidt  morphisms between Hilbert C*-modules over commutative C*-algebras by Stern and van Suijlekom \textit{[J. Funct. Anal., 2021]}, we introduce the notion of p-absolutely summing morphisms between Hilbert C*-modules over commutative C*-algebras. We show that  an adjointable morphism between Hilbert C*-modules over monotone closed commutative C*-algebra is 2-absolutely summing if and only if   it is Hilbert-Schmidt. We formulate version of Pietsch factorization problem  for p-absolutely summing morphisms and solve partially.

\textbf{Keywords}:   Absolutely summing operator, Commutative C*-algebra, Hilbert C*-module, Hilbert-Schmidt operator.

\textbf{Mathematics Subject Classification (2020)}: 47B10, 47L20, 46L08, 46L05, 42C15.\\

\hrule

\hrule
\section{Introduction}
In his \textit{Resume}, A. Grothendieck studied 1-absolutely and 2-absolutely summing operators between Banach spaces \cite{GROTHENDIECK} (also see \cite{DIESTELFOURIESWART}). In 1967, for each $1\leq p<\infty$, A. Pietsch introduced the notion of p-absolutely summing operators which became an area around the end of 20 century  \cite{TOMCZAK,  DEFANTFLORET, WOJTASZCZYK, LINDENSTRAUSSPELCZYNKSKI, PIETSCH, DIESTELJARCHOWTONGE, PELCZYNSKI, JOHNSONSCHECHTMAN, PIETSCH2, HANDBOOK1, HANDBOOK2, PIETSCHHISTORY, HYTONENVANNEERVENVERAARWEIS, JAMESON, GARLING, LIQUEFFELEC2, LIQUEFFELEC1, LINDENSTRAUSSTZAFRIRI}. In 1979, Tomczak-Jaegermann studied p-summing operators by fixing by fixing  number of points  \cite{TOMCZAK2}.  In 1970, Kwapien defined the notion of 0-summing operators \cite{KWAPIEN}. In 2003, Farmer and Johnson introduced the notion of Lipschitz p-summing operators between metric spaces \cite{FARMERJOHNSON} (also see  \cite{BOTELHOPELLEGRINORUEDA, PELLEGRINOSANTOS, CHENZHENG, BOTELHOMAIAPELLEGRINOSANTOS}).
   \begin{definition}\cite{ALBIACKALTON, PIETSCH}\label{PSUMMING}
  	Let $\mathcal{X}$ and $\mathcal{Y}$ be Banach spaces, $\mathcal{X}^*$ be the dual of $\mathcal{X}$ and $1\leq p<\infty$. A bounded linear operator $T:\mathcal{X} \to \mathcal{Y}$ is said to be \textbf{p-absolutely summing} if there is a real constant $C>0$ satisfying following: for every   $n\in \mathbb{N}$ and for all $x_1, \dots, x_n \in \mathcal{X}$, 
  	\begin{align}\label{ABSUMEQ}
  	\left(\sum_{j=1}^{n}\|Tx_j\|^p\right)^\frac{1}{p}\leq C \sup_{f\in \mathcal{X}^*, \|f\|\leq 1}	\left(\sum_{j=1}^{n}|f(x_j)|^p\right)^\frac{1}{p}.
  	\end{align}
 In this case, the \textbf{p-absolutely summing norm} of $T$ is defined as 
\begin{align*}
	\pi_p(T)\coloneqq \inf \{C: C\text{ satisfies Inequality } (\ref{ABSUMEQ})\}.
\end{align*}
The set of all p-absolutely summing operators from $\mathcal{X}$ to  $\mathcal{Y}$ is denoted by $\Pi_p(\mathcal{X}, \mathcal{Y})$. 
 \end{definition}
Following are most important results in the theory of p-absolutely summing operators. 
\begin{theorem}\cite{PIETSCH2, PIETSCH}
Let $1\leq p <\infty $ and    $\mathcal{X}$, $\mathcal{Y}$ be Banach spaces. Then   $(\Pi_p(\mathcal{X}, \mathcal{Y}), \pi_p(\cdot))$	is an operator ideal. 
\end{theorem}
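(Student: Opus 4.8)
The plan is to check, one axiom at a time, that $\Pi_p(\mathcal{X},\mathcal{Y})$ is a linear subspace of $\mathcal{L}(\mathcal{X},\mathcal{Y})$ containing the finite-rank operators, that $\pi_p$ is a complete norm on it which dominates the operator norm and is correctly normalized on rank-one maps, and that the two-sided ideal (multiplication) property holds. I begin with the linear and normed structure. Positive homogeneity $\pi_p(\lambda T)=|\lambda|\,\pi_p(T)$ is immediate from Inequality (\ref{ABSUMEQ}). For the triangle inequality, given $S,T\in\Pi_p(\mathcal{X},\mathcal{Y})$ and $x_1,\dots,x_n\in\mathcal{X}$, I use the pointwise bound $\|(S+T)x_j\|\le\|Sx_j\|+\|Tx_j\|$ together with Minkowski's inequality in $\ell^p$ and then the defining estimates for $S$ and $T$ separately; passing to the infimum over admissible constants yields $\pi_p(S+T)\le\pi_p(S)+\pi_p(T)$. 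Taking $n=1$ in (\ref{ABSUMEQ}) and using $\sup_{\|f\|\le1}|f(x)|=\|x\|$ (Hahn--Banach) gives $\|Tx\|\le\pi_p(T)\|x\|$ for all $x$; hence $\|T\|\le\pi_p(T)$, and in particular $\pi_p$ separates points.

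Next I record that finite-rank operators belong to $\Pi_p$ with the right normalization, for which it is enough to treat a rank-one map $x^*\otimes y\colon x\mapsto x^*(x)\,y$ with $x^*\in\mathcal{X}^*$ and $y\in\mathcal{Y}$. For any $x_1,\dots,x_n$,
\[
\left(\sum_{j=1}^n\|x^*(x_j)\,y\|^p\right)^{1/p}=\|y\|\left(\sum_{j=1}^n|x^*(x_j)|^p\right)^{1/p}\le\|x^*\|\,\|y\|\,\sup_{\|f\|\le1}\left(\sum_{j=1}^n|f(x_j)|^p\right)^{1/p},
\]
so $\pi_p(x^*\otimes y)\le\|x^*\|\,\|y\|$; the reverse inequality follows from $\|T\|\le\pi_p(T)$ and $\|x^*\otimes y\|=\|x^*\|\,\|y\|$. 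Finite sums of such operators together with the triangle inequality show every finite-rank operator is $p$-absolutely summing.

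For the ideal property, let $R\in\mathcal{L}(\mathcal{X}_0,\mathcal{X})$, $T\in\Pi_p(\mathcal{X},\mathcal{Y})$, $S\in\mathcal{L}(\mathcal{Y},\mathcal{Y}_0)$ and $x_1,\dots,x_n\in\mathcal{X}_0$. Then
\[
\sum_{j=1}^n\|STRx_j\|^p\le\|S\|^p\sum_{j=1}^n\|T(Rx_j)\|^p\le\|S\|^p\,\pi_p(T)^p\sup_{\|f\|\le1}\sum_{j=1}^n|f(Rx_j)|^p,
\]
and, writing $(R^*f)(x)=f(Rx)$ and noting $\|R^*f\|\le\|R\|\,\|f\|$,
\[
\sup_{\|f\|\le1}\sum_{j=1}^n|f(Rx_j)|^p=\sup_{\|f\|\le1}\sum_{j=1}^n|(R^*f)(x_j)|^p\le\sup_{\|h\|\le\|R\|}\sum_{j=1}^n|h(x_j)|^p=\|R\|^p\sup_{\|g\|\le1}\sum_{j=1}^n|g(x_j)|^p.
\]
Combining the two displays shows $STR\in\Pi_p(\mathcal{X}_0,\mathcal{Y}_0)$ with $\pi_p(STR)\le\|S\|\,\pi_p(T)\,\|R\|$.

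It remains to prove completeness, which I expect to be the one step needing care. Since $\|\cdot\|\le\pi_p(\cdot)$, a $\pi_p$-Cauchy sequence $(T_k)$ in $\Pi_p(\mathcal{X},\mathcal{Y})$ is operator-norm Cauchy and hence converges in operator norm to some $T\in\mathcal{L}(\mathcal{X},\mathcal{Y})$. The crucial observation is that for a fixed finite family $x_1,\dots,x_n$ the map $U\mapsto\big(\sum_{j=1}^n\|Ux_j\|^p\big)^{1/p}$ is operator-norm continuous, so (\ref{ABSUMEQ}) is stable under the limit: given $\varepsilon>0$ pick $K$ with $\pi_p(T_k-T_m)<\varepsilon$ for $k,m\ge K$; then for every fixed finite family and every $m\ge K$, letting $k\to\infty$ in $\big(\sum_j\|(T_k-T_m)x_j\|^p\big)^{1/p}\le\varepsilon\sup_{\|f\|\le1}\big(\sum_j|f(x_j)|^p\big)^{1/p}$ gives the same bound with $T_k$ replaced by $T$. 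Hence $T-T_m\in\Pi_p(\mathcal{X},\mathcal{Y})$ with $\pi_p(T-T_m)\le\varepsilon$, so $T=T_m+(T-T_m)\in\Pi_p(\mathcal{X},\mathcal{Y})$ and $\pi_p(T_k-T)\to0$. Collecting the four parts establishes that $(\Pi_p(\mathcal{X},\mathcal{Y}),\pi_p(\cdot))$ is an operator ideal.
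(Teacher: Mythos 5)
The paper does not prove this statement -- it is quoted from \cite{PIETSCH2, PIETSCH} without argument -- so there is no in-paper proof to compare against. Your verification is the standard textbook one and is correct: homogeneity and the Minkowski-based triangle inequality give the normed linear structure, the $n=1$ case plus Hahn--Banach gives $\|T\|\le\pi_p(T)$, the rank-one computation gives the normalization $\pi_p(x^*\otimes y)=\|x^*\|\,\|y\|$, the composition estimate $\pi_p(STR)\le\|S\|\,\pi_p(T)\,\|R\|$ is handled correctly by pulling $R$ onto the functionals, and the completeness argument (pass to the operator-norm limit inside the defining inequality for each fixed finite family) is sound. The only remark worth making is that you have in fact verified the stronger assertion that $(\Pi_p,\pi_p)$ is a \emph{Banach} operator ideal in Pietsch's sense, which is presumably what the cited theorem intends.
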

\begin{theorem}\cite{ALBIACKALTON, PIETSCH} \label{2IFFHS}
Let $\mathcal{H}$ and $\mathcal{K}$ be Hilbert spaces. Then a bounded linear operator $T:\mathcal{H}\to \mathcal{K}$ is  2-absolutely summing if and only if it is Hilbert-Schmidt. Moreover,  $\|T\|_{\text{HS}}= \pi_2(T)$. 
\end{theorem}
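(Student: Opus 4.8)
The plan is to establish the two inequalities $\pi_2(T)\le \|T\|_{\text{HS}}$ and $\|T\|_{\text{HS}}\le \pi_2(T)$ separately: the first yields the implication ``Hilbert--Schmidt $\Rightarrow$ $2$-absolutely summing'', the second its converse, and together they give both the equivalence and the norm identity.

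For the first inequality, assume $T$ is Hilbert--Schmidt. Fix an orthonormal basis $(g_\ell)$ of $\overline{\mathrm{ran}\,T}$ (or of $\mathcal{K}$) and recall $\|T\|_{\text{HS}}^2=\|T^*\|_{\text{HS}}^2=\sum_\ell\|T^*g_\ell\|^2$. Given $x_1,\dots,x_n\in\mathcal{H}$, expand each $\|Tx_j\|^2$ by Parseval with respect to $(g_\ell)$ and interchange the resulting double sum of non-negative terms (justified by Tonelli) to obtain $\sum_{j=1}^n\|Tx_j\|^2=\sum_\ell\sum_{j=1}^n|\langle x_j,T^*g_\ell\rangle|^2$. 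For each $\ell$ with $T^*g_\ell\ne 0$, the functional $f_\ell=\langle\,\cdot\,,T^*g_\ell\rangle/\|T^*g_\ell\|$ lies in the unit ball of $\mathcal{H}^*$, so $\sum_{j=1}^n|\langle x_j,T^*g_\ell\rangle|^2\le\|T^*g_\ell\|^2\sup_{\|f\|\le1}\sum_{j=1}^n|f(x_j)|^2$; summing over $\ell$ gives $\sum_{j=1}^n\|Tx_j\|^2\le\|T\|_{\text{HS}}^2\sup_{\|f\|\le1}\sum_{j=1}^n|f(x_j)|^2$, hence $T\in\Pi_2(\mathcal{H},\mathcal{K})$ with $\pi_2(T)\le\|T\|_{\text{HS}}$.

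For the second inequality, assume $T$ is $2$-absolutely summing with constant $C$. Let $e_1,\dots,e_n$ be any finite orthonormal system in $\mathcal{H}$. By Bessel's inequality, $\sum_{j=1}^n|\langle e_j,x\rangle|^2\le\|x\|^2\le1$ for every $x$ with $\|x\|\le1$, so applying the defining inequality to $x_1=e_1,\dots,x_n=e_n$ yields $\sum_{j=1}^n\|Te_j\|^2\le C^2$. Taking the supremum over all finite orthonormal systems (equivalently, summing over a full orthonormal basis) gives $\|T\|_{\text{HS}}^2\le C^2$, so $T$ is Hilbert--Schmidt and $\|T\|_{\text{HS}}\le\pi_2(T)$. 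Combined with the previous paragraph this proves $\|T\|_{\text{HS}}=\pi_2(T)$.

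I do not expect a serious obstacle, since this is classical; the one point needing care in the first part is the choice of test functionals --- one must recognize that the coordinate functionals of the target basis, pulled back through $T^*$ and normalized, are precisely the norm-one functionals that make the Parseval expansion collapse to $\|T\|_{\text{HS}}^2$ --- together with the legitimacy of interchanging summations, which holds because every term is non-negative. A variant would instead diagonalize $T$ via its singular value decomposition $T=\sum_k s_k\langle\,\cdot\,,e_k\rangle g_k$ and argue along the same lines; the bookkeeping is marginally heavier but conceptually identical.
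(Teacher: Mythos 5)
Your argument is correct, and it is the standard textbook proof. Note, however, that the paper does not actually prove Theorem \ref{2IFFHS}: it is quoted as a classical result with citations, so there is no in-paper proof to measure you against. Both of your directions are sound --- the Parseval expansion $\sum_j\|Tx_j\|^2=\sum_\ell\sum_j|\langle x_j,T^*g_\ell\rangle|^2$ with the normalized functionals $\langle\,\cdot\,,T^*g_\ell\rangle/\|T^*g_\ell\|$ gives $\pi_2(T)\le\|T\|_{\text{HS}}$, and testing the summing inequality on finite orthonormal systems (using Riesz representation to identify the unit ball of $\mathcal{H}^*$ with that of $\mathcal{H}$, and Bessel) gives the reverse inequality. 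The closest thing in the paper is the proof of the modular analogue, Theorem \ref{MOD2HS}, which follows the same circle of ideas but handles the ``Hilbert--Schmidt $\Rightarrow$ $2$-summing'' direction differently: instead of expanding and choosing test functionals, it builds the auxiliary operator $S:x\mapsto\sum_j\langle x,\omega_j\rangle x_j$ sending an orthonormal basis to the given vectors, computes $\|S\|^2=\sup_{\|x\|\le1}\|\sum_j\langle x,x_j\rangle\langle x_j,x\rangle\|$, and invokes the ideal property $\|TS\|_{\text{HS}}\le\|T\|_{\text{HS}}\|S\|$. That factorization trick transfers more readily to the Hilbert C*-module setting (where individual ``coordinate functionals'' are less convenient), whereas your direct functional-analytic computation is arguably more transparent in the scalar Hilbert-space case.
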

\begin{theorem}\cite{ALBIACKALTON, PIETSCH}\label{PETSCHFACTORIZATIONTHEOREM}
(\textbf{Pietsch Factorization Theorem})	Let $\mathcal{X}$ and $\mathcal{Y}$ be Banach spaces. A bounded linear operator $T:\mathcal{X} \to \mathcal{Y}$ is  p-absolutely summing if and only if there is a real constant $C>0$ and    a regular Borel probability measure on $B_{\mathcal{X}^*}\coloneqq \{f:f \in \mathcal{X}^*, \|f\|\leq 1\}$ in weak*-topology  such that 
\begin{align}\label{FACINE}
\|Tx\|\leq C \left(\int\limits_{B_{\mathcal{X}^*}}|f(x)|^p\,d\mu_{B_{\mathcal{X}^*}}(f)\right)^\frac{1}{p}, \quad \forall x \in \mathcal{X}.
\end{align}
Moreover,  $	\pi_p(T)=\inf \{ C: C \text{ satisfies Inequality }(\ref{FACINE})\}$.
\end{theorem}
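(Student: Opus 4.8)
The plan is to prove the two implications separately: the ``if'' direction is a one-line estimate, while the ``only if'' direction rests on a Hahn--Banach (minimax) separation over the weak*-compact unit ball $K \coloneqq B_{\mathcal{X}^*}$.

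For the easy direction, suppose $C$ and a regular Borel probability measure $\mu$ on $K$ satisfy Inequality (\ref{FACINE}). Given $x_1,\dots,x_n \in \mathcal{X}$, I would raise (\ref{FACINE}) to the $p$-th power, sum over $j$, and pull the finite sum inside the integral:
\begin{align*}
\sum_{j=1}^{n}\|Tx_j\|^p \leq C^p \int_K \sum_{j=1}^{n}|f(x_j)|^p \, d\mu(f) \leq C^p \sup_{f \in K}\sum_{j=1}^{n}|f(x_j)|^p,
\end{align*}
using that $\mu(K)=1$. This shows $T \in \Pi_p(\mathcal{X},\mathcal{Y})$ with $\pi_p(T) \leq C$, hence $\pi_p(T) \leq \inf\{C : C \text{ satisfies }(\ref{FACINE})\}$.

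For the converse, set $C \coloneqq \pi_p(T)$ and view $K$ as a compact Hausdorff space in the weak*-topology, with $C(K)$ the real Banach space of continuous functions on $K$ and $\mathcal{P}(K) \subseteq C(K)^*$ the weak*-compact convex set of regular Borel probability measures. To each finite system $x_1,\dots,x_n \in \mathcal{X}$ I would associate the function
\begin{align*}
\varphi_{x_1,\dots,x_n}(f) \coloneqq \sum_{j=1}^{n}\|Tx_j\|^p - C^p \sum_{j=1}^{n}|f(x_j)|^p \in C(K),
\end{align*}
and let $\mathcal{F}$ be the set of all these functions. A $p$-homogeneous rescaling $x_j \mapsto t^{1/p}x_j$ together with concatenation of systems shows $\mathcal{F}$ is convex, and the definition of $\pi_p(T)=C$ gives $\min_{f\in K}\varphi(f) \leq 0$ for every $\varphi \in \mathcal{F}$. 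Applying a minimax theorem to the bilinear form $(\mu,\varphi)\mapsto \int_K \varphi\,d\mu$ on $\mathcal{P}(K)\times\mathcal{F}$ and using that the extreme points of $\mathcal{P}(K)$ are the Dirac measures, I get
\begin{align*}
\min_{\mu\in\mathcal{P}(K)}\sup_{\varphi\in\mathcal{F}}\int_K \varphi\,d\mu = \sup_{\varphi\in\mathcal{F}}\min_{\mu\in\mathcal{P}(K)}\int_K \varphi\,d\mu = \sup_{\varphi\in\mathcal{F}}\min_{f\in K}\varphi(f) \leq 0,
\end{align*}
so there is $\mu \in \mathcal{P}(K)$ with $\int_K \varphi\,d\mu \leq 0$ for all $\varphi \in \mathcal{F}$. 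Specializing to a single vector $x \in \mathcal{X}$ gives $\|Tx\|^p \leq C^p \int_K |f(x)|^p\,d\mu(f)$, which is (\ref{FACINE}) with constant $C=\pi_p(T)$; combined with the easy direction this also yields the ``moreover'' statement $\pi_p(T)=\inf\{C : C \text{ satisfies }(\ref{FACINE})\}$.

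The hard part will be the converse, and within it the passage from ``$T$ is $p$-summing'' to the existence of $\mu$: one must verify carefully that $\mathcal{F}$ is convex (the $t^{1/p}$ rescaling and concatenation) and that the minimax exchange is legitimate — e.g. via Sion's theorem, since $(\mu,\varphi)\mapsto\int_K\varphi\,d\mu$ is weak*-continuous and affine in $\mu$ on the compact convex set $\mathcal{P}(K)$ and affine in $\varphi$. An equivalent route I would keep in reserve is a direct geometric Hahn--Banach argument: separate the convex set $\mathcal{F}$ from the open convex cone $\{g \in C(K): g<0 \text{ on }K\}$, extract a nonzero positive linear functional on $C(K)$, and normalize it via the Riesz representation theorem to the desired probability measure. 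The remaining bookkeeping (taking infima, the norm equality) is routine.
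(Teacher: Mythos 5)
The paper does not prove this theorem; it is quoted as a classical result from the cited references (Albiac--Kalton, Pietsch), so there is no in-paper proof to compare against. Your sketch is the standard textbook argument --- the trivial integration estimate for the ``if'' direction, and for the converse the convex family $\varphi_{x_1,\dots,x_n}$ on the weak*-compact ball $B_{\mathcal{X}^*}$, made convex by the $t^{1/p}$-rescaling-plus-concatenation trick, followed by a minimax or Hahn--Banach separation and Riesz representation --- and it is correct in outline, including the derivation of $\pi_p(T)=\inf\{C\}$ from the two directions.
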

In this paper, we define the notion of p-absolutely summing morphisms between Hilbert C*-modules over commutative C*-algebras (Definition \ref{MOD2}). We derive in Theorem \ref{MOD2HS} that an adjointable morphism between Hilbert C*-module over a monote closed C*-algebra  is 2-summing if and only if modular Hilbert-Schmidt. We then formulate version of Pietsch factorization problem for p-absolutely summing morphisms and solve partially.

  \section{p-absolutely summing morphisms}
  We define modular version of Definition  \ref{PSUMMING}  as follows. For the theory of Hilbert C*-modules we refer \cite{PASCHKE, RIEFFEL, KAPLANSKY}.
  \begin{definition}\label{MOD2}
Let $1\leq p <\infty$.	Let $\mathcal{M}$ and $\mathcal{N}$ be Hilbert C*-modules over a commutative C*-algebra $\mathcal{A}$. An adjointable morphism   $T:\mathcal{M} \to \mathcal{N}$ is said to be \textbf{modular p-absolutely summing} if there is a real constant $C>0$ satisfying following: for every   $n\in \mathbb{N}$ and for all $x_1, \dots, x_n \in \mathcal{M}$, 
\begin{align}\label{MODABSUMEQ}
	\left\|\sum_{j=1}^{n}\langle Tx_j, Tx_j \rangle^\frac{p}{2}\right\|^\frac{1}{p}\leq C \sup_{x\in \mathcal{M}, \|x\|\leq 1}	\left\|\sum_{j=1}^{n}(\langle x, x_j \rangle \langle x_j, x  \rangle)^\frac{p}{2} \right\|^\frac{1}{p}.
\end{align}
In this case, the p-absolutely summing norm of $T$ is defined as 
\begin{align*}
	\pi_p(T)\coloneqq \inf \{C: C\text{ satisfies Inequality } (\ref{MODABSUMEQ})\}.
\end{align*}
The set of all p-absolutely summing morphisms from $\mathcal{M}$ to  $\mathcal{N}$ is denoted by $\Pi_p(\mathcal{M}, \mathcal{N})$. 	
\end{definition}  
In 2021, Stern and van Suijlekom introduced the notion of modular Schatten class morphisms \cite{STERNVANSUIJEKOM}. 
\begin{definition}\cite{STERNVANSUIJEKOM}
Let $1\leq p <\infty$. Let $\mathcal{A}$ be a C*-algebra and $\widehat{\mathcal{A}}$ be its Gelfand spectrum. 	Let $\mathcal{M}$ and $\mathcal{N}$ be Hilbert C*-modules over   $\mathcal{A}$. Let $T:\mathcal{M} \to \mathcal{N}$ be an adjointable morphism. We say that $T$ is in 	the \textbf{modular p-Schatten class} if the function
\begin{align*}
	\operatorname{Tr}|T|^p: \widehat{\mathcal{A}}\ni \chi \mapsto \operatorname{Tr}|\chi_*T|^p \in \mathbb{R}\cup \{\infty\}
\end{align*}
lies in $\mathcal{A}$. The modular p-Schatten norm of $T$ is defined as 
\begin{align*}
	\|T\|_p\coloneqq \|\operatorname{Tr}|T|^p\|_\mathcal{A}^\frac{1}{p}.
\end{align*}
Modular 2-Schatten (resp. 1-Schatten) class morphism  is called as \textbf{modular Hilbert-Schmidt} (resp. \textbf{modular trace class}). We denote $	\|T\|_2$ by $\|T\|_{\text{HS}}$.
\end{definition}
 Using the theory of modular frames for Hilbert C*-modules (see \cite{FRANKLARSON}) Stern and van Suijlekom were able to derive following result.
\begin{theorem}\cite{STERNVANSUIJEKOM}\label{SVSTHEOREM}
	Let $\mathcal{M}$ and $\mathcal{N}$ be Hilbert C*-modules over   $\mathcal{A}$. Let $T:\mathcal{M} \to \mathcal{N}$ be an adjointable morphism. 	Then $T$ is modular Hilbert-Schmidt if and only if for every modular Parseval frame  $\{\tau_n\}_n$ for $\mathcal{M}$, the series  $\sum_{n=1}^{\infty}\langle |T|^p\tau_n,  \tau_n \rangle$ converges in norm in $\mathcal{A}$ and 
	\begin{align*}
	\operatorname{Tr}|T|^p=\sum_{n=1}^{\infty}\langle |T|^p\tau_n,  \tau_n \rangle.
	\end{align*}
\end{theorem}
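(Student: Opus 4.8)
The plan is to prove the identity fiberwise over the Gelfand spectrum $\widehat{\mathcal{A}}$ and then to pull the resulting pointwise statement on $\widehat{\mathcal{A}}$ back to a norm statement in $\mathcal{A}$. For $\chi \in \widehat{\mathcal{A}}$ let $\mathcal{M}_\chi$ be the Hilbert space obtained by completing $\mathcal{M}/\{x : \chi(\langle x,x\rangle)=0\}$ with respect to $\langle x_\chi, y_\chi\rangle := \chi(\langle x,y\rangle)$, where $x_\chi$ denotes the class of $x$; the set $\{x_\chi : x\in\mathcal{M}\}$ is dense in $\mathcal{M}_\chi$. An adjointable morphism $T$ descends to a bounded operator $\chi_*T:\mathcal{M}_\chi\to\mathcal{N}_\chi$, $\chi_*T(x_\chi)=(Tx)_\chi$, and $\chi\mapsto\chi_*(\cdot)$ is compatible with composition, adjoints and, via the continuous functional calculus in the C*-algebra of adjointable operators, with the operation $T\mapsto|T|^p$; in particular $\chi_*(|T|^p)=|\chi_*T|^p$. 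I would next record the standard fact (implicit in the modular frame theory of \cite{FRANKLARSON, STERNVANSUIJEKOM}) that a modular Parseval frame $\{\tau_n\}_n$ for $\mathcal{M}$ localizes to a Parseval frame $\{(\tau_n)_\chi\}_n$ for $\mathcal{M}_\chi$: applying the character $\chi$ to the norm-convergent identity $\sum_n\langle x,\tau_n\rangle\langle\tau_n,x\rangle=\langle x,x\rangle$ gives $\sum_n|\langle x_\chi,(\tau_n)_\chi\rangle|^2=\|x_\chi\|^2$ on the dense subspace, whence the frame operator of $\{(\tau_n)_\chi\}_n$ is a positive contraction agreeing with the identity on a dense set, hence equals the identity.

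The analytic core is the elementary Hilbert space identity: if $A$ is a positive operator on a Hilbert space $H$ and $\{e_n\}_n$ is a Parseval frame for $H$, then $A$ is trace class if and only if $\sum_n\langle Ae_n,e_n\rangle<\infty$, and then $\operatorname{Tr}A=\sum_n\langle Ae_n,e_n\rangle$. I would prove it by factoring through the synthesis operator $U:\ell^2\to H$, $U\delta_n=e_n$, which for a Parseval frame is a coisometry, $UU^*=I_H$: then $\sum_n\langle Ae_n,e_n\rangle=\sum_n\|A^{1/2}U\delta_n\|^2=\|A^{1/2}U\|_{\text{HS}}^2$, which is finite exactly when $A^{1/2}U$ is Hilbert--Schmidt, i.e. when $A=A^{1/2}(UU^*)A^{1/2}=(A^{1/2}U)(A^{1/2}U)^*$ is trace class, with the stated trace value. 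Applying this with $A=|\chi_*T|^p$, $e_n=(\tau_n)_\chi$, and using $\chi(\langle|T|^p\tau_n,\tau_n\rangle)=\langle|\chi_*T|^p(\tau_n)_\chi,(\tau_n)_\chi\rangle$, yields for every $\chi\in\widehat{\mathcal{A}}$
\begin{align*}
\operatorname{Tr}|\chi_*T|^p=\sum_{n=1}^{\infty}\chi\bigl(\langle|T|^p\tau_n,\tau_n\rangle\bigr),
\end{align*}
both sides being simultaneously finite or infinite.

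It remains to transfer this. For the ``only if'' direction, assume $T$ is modular Hilbert--Schmidt, so $g:=\operatorname{Tr}|T|^p\in\mathcal{A}$; the partial sums $S_N:=\sum_{n=1}^{N}\langle|T|^p\tau_n,\tau_n\rangle$ form an increasing sequence in $\mathcal{A}_+$ whose Gelfand transforms increase pointwise on $\widehat{\mathcal{A}}$ to $\widehat{g}$ by the displayed identity. Since $\widehat{g}$ and all $\widehat{S_N}$ lie in $C_0(\widehat{\mathcal{A}})$, extending them by $0$ to the one-point compactification and applying Dini's theorem gives uniform convergence $\widehat{S_N}\to\widehat{g}$, i.e. $\|S_N-g\|_{\mathcal{A}}\to0$, which is the asserted norm convergence $\sum_n\langle|T|^p\tau_n,\tau_n\rangle=\operatorname{Tr}|T|^p$. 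For the ``if'' direction, fix a modular Parseval frame $\{\tau_n\}_n$ for $\mathcal{M}$ (assuming, as is implicit here, that one exists) and suppose $\sum_n\langle|T|^p\tau_n,\tau_n\rangle$ converges in norm to some $g\in\mathcal{A}$; evaluating at an arbitrary $\chi$ and using the displayed identity shows $\operatorname{Tr}|\chi_*T|^p=\widehat{g}(\chi)<\infty$, so the function $\chi\mapsto\operatorname{Tr}|\chi_*T|^p$ equals $\widehat{g}$ and hence lies in $\mathcal{A}$, i.e. $T$ is modular Hilbert--Schmidt. The step I expect to be the main obstacle is precisely this last transfer: one must ensure that the monotone pointwise convergence on $\widehat{\mathcal{A}}$ is genuinely uniform (handled by Dini on the compactification, using that $g\in\mathcal{A}$ is given), and one must check carefully that localization intertwines the continuous functional calculus so that $\chi_*(|T|^p)=|\chi_*T|^p$ for non-integer $p$ as well.
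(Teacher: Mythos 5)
This theorem is quoted from \cite{STERNVANSUIJEKOM} and the paper supplies no proof of its own, so there is nothing internal to compare against; your argument is correct and follows the localization strategy one expects from that source: fiberwise reduction over the Gelfand spectrum, the Parseval-frame trace identity on each fiber via the coisometric synthesis operator, and Dini's theorem on the one-point compactification to upgrade monotone pointwise convergence to norm convergence in $C_0(\widehat{\mathcal{A}})$. The only caveats are the ones you already flag (existence of a modular Parseval frame, i.e.\ countable generation and Kasparov stabilization, and the compatibility $\chi_*(|T|^p)=|\chi_*T|^p$ of localization with the continuous functional calculus), together with the remark that the statement as printed should read ``modular $p$-Schatten class'' rather than ``modular Hilbert--Schmidt'' for general $p$.
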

We now derive modular version of Theorem  \ref{2IFFHS} with the following notion. 
\begin{definition}\cite{TAKESAKI}
A C*-algebra $\mathcal{A}$ is said to be \textbf{monotone closed} if every bounded increasing net in $\mathcal{A}$ has the least upper bound in 	$\mathcal{A}$.
\end{definition}
\begin{theorem}\label{MOD2HS}
	Let $\mathcal{M}$ and $\mathcal{N}$ be Hilbert C*-modules over a commutative  C*-algebra $\mathcal{A}$. Assume  $\mathcal{A}$ is monotone closed. Let $T:\mathcal{M} \to \mathcal{N}$ be an adjointable morphism. Then  $T \in \Pi_2(\mathcal{M}, \mathcal{N})$ if and only if  $T$ is modular Hilbert-Schmidt. Moreover,  $\|T\|_{\text{HS}}= \pi_2(T)$. 
 \end{theorem}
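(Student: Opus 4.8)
The strategy is to reduce the modular statement, fibrewise, to the classical Hilbert-space result (Theorem \ref{2IFFHS}) via Gelfand theory, using the monotone-closedness of $\mathcal{A}$ to guarantee that the pointwise suprema and series assemble back into elements of $\mathcal{A}$. Write $\mathcal{A}\cong C(\widehat{\mathcal{A}})$ and, for each character $\chi\in\widehat{\mathcal{A}}$, localise $\mathcal{M}$ and $\mathcal{N}$ to obtain Hilbert spaces $\mathcal{M}_\chi$, $\mathcal{N}_\chi$ (the completion of $\mathcal{M}\otimes_\chi\mathbb{C}$ with respect to $\langle x\otimes 1, y\otimes 1\rangle_\chi = \chi(\langle y,x\rangle)$) together with the induced bounded operator $T_\chi=\chi_*T:\mathcal{M}_\chi\to\mathcal{N}_\chi$. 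The key dictionary entries are: $\chi(\langle Tx,Tx\rangle^{p/2}) = \|T_\chi(x)_\chi\|^p$ and $\chi\big((\langle x,x_j\rangle\langle x_j,x\rangle)^{p/2}\big) = |\langle (x_j)_\chi,x_\chi\rangle|^p$, so that evaluating the $\mathcal{A}$-valued quantities in (\ref{MODABSUMEQ}) at $\chi$ produces exactly the two sides of the classical $p$-summing inequality (\ref{ABSUMEQ}) for $T_\chi$ on the Hilbert space $\mathcal{M}_\chi$. The same computation shows $\operatorname{Tr}|T|^p$ evaluated at $\chi$ equals $\|T_\chi\|_{\mathrm{HS}}^p$ when $p=2$.

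For the forward direction, assume $T\in\Pi_2(\mathcal{M},\mathcal{N})$ with constant $C=\pi_2(T)$. Fixing $\chi$ and finitely many $x_1,\dots,x_n$, apply $\chi$ to both sides of (\ref{MODABSUMEQ}); since $\chi$ is a state, $\chi$ of a norm is dominated by the norm, giving
\begin{align*}
\left(\sum_{j=1}^n\|T_\chi (x_j)_\chi\|^2\right)^{1/2}\le C\sup_{\|x\|\le 1}\left(\sum_{j=1}^n|\langle (x_j)_\chi, x_\chi\rangle|^2\right)^{1/2}\le C\sup_{\|y\|_{\mathcal{M}_\chi}\le 1}\left(\sum_{j=1}^n|\langle (x_j)_\chi, y\rangle|^2\right)^{1/2},
\end{align*}
using that the image of the unit ball of $\mathcal{M}$ is norm-dense in (a set spanning) the unit ball of $\mathcal{M}_\chi$. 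Hence each $T_\chi$ is $2$-summing on the Hilbert space $\mathcal{M}_\chi$ with $\pi_2(T_\chi)\le C$, so by Theorem \ref{2IFFHS} it is Hilbert-Schmidt with $\|T_\chi\|_{\mathrm{HS}}=\pi_2(T_\chi)\le C$. Thus the function $\chi\mapsto\operatorname{Tr}|T_\chi|^2=\|T_\chi\|_{\mathrm{HS}}^2$ is bounded by $C^2$; the remaining point is to see it lies in $\mathcal{A}$, which is where monotone-closedness enters: writing it as the pointwise supremum over finite subsets $F$ of a modular Parseval frame of $\sum_{n\in F}\chi(\langle|T|^2\tau_n,\tau_n\rangle)$ — a bounded increasing net of elements of $\mathcal{A}$ — the least upper bound exists in $\mathcal{A}$ and, by a Dini-type argument on $\widehat{\mathcal{A}}$, coincides with the pointwise supremum, i.e. with $\operatorname{Tr}|T|^2$. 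So $T$ is modular Hilbert-Schmidt and $\|T\|_{\mathrm{HS}}=\|\operatorname{Tr}|T|^2\|_{\mathcal{A}}^{1/2}\le C=\pi_2(T)$.

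For the converse, assume $T$ is modular Hilbert-Schmidt with $\|T\|_{\mathrm{HS}}^2=\|\operatorname{Tr}|T|^2\|_{\mathcal{A}}$. For each $\chi$, $\|T_\chi\|_{\mathrm{HS}}^2=\chi(\operatorname{Tr}|T|^2)\le\|T\|_{\mathrm{HS}}^2$, so by Theorem \ref{2IFFHS} again $\pi_2(T_\chi)=\|T_\chi\|_{\mathrm{HS}}\le\|T\|_{\mathrm{HS}}$; thus for every $\chi$ and every finite family,
\begin{align*}
\sum_{j=1}^n\|T_\chi(x_j)_\chi\|^2\le\|T\|_{\mathrm{HS}}^2\sup_{\|y\|_{\mathcal{M}_\chi}\le 1}\sum_{j=1}^n|\langle (x_j)_\chi,y\rangle|^2.
\end{align*}
The left side, as a function of $\chi$, is $\chi$ applied to $\sum_j\langle Tx_j,Tx_j\rangle$; the right side needs to be re-expressed in terms of the modular supremum in (\ref{MODABSUMEQ}). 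The cleanest route is to show that for the positive element $a=\sum_j\langle Tx_j,Tx_j\rangle\in\mathcal{A}$ one has $\|a\|=\sup_\chi\chi(a)$ (true since $a\ge 0$), and similarly that the modular supremum $\sup_{\|x\|\le1}\|\sum_j\langle x,x_j\rangle\langle x_j,x\rangle\|$ dominates $\sup_\chi\sup_{\|y\|_{\mathcal{M}_\chi}\le 1}\sum_j|\langle (x_j)_\chi,y\rangle|^2$; combining these gives (\ref{MODABSUMEQ}) with $C=\|T\|_{\mathrm{HS}}$, hence $T\in\Pi_2(\mathcal{M},\mathcal{N})$ with $\pi_2(T)\le\|T\|_{\mathrm{HS}}$. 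Together with the forward inequality this yields $\|T\|_{\mathrm{HS}}=\pi_2(T)$.

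I expect the main obstacle to be the interchange between the $\mathcal{A}$-norm of an $\mathcal{A}$-valued expression and the supremum over characters of its fibrewise values — in particular verifying that $\operatorname{Tr}|T|^2$, a priori only a bounded lower-semicontinuous function on $\widehat{\mathcal{A}}$, actually belongs to $\mathcal{A}=C(\widehat{\mathcal{A}})$; this is precisely the role of the monotone-closedness hypothesis (it is the abstract substitute for the fact that an increasing sequence of continuous functions with continuous pointwise supremum converges uniformly on compacta), and the argument must be set up so that Theorem \ref{SVSTHEOREM} and the frame expansion of $\operatorname{Tr}|T|^2$ are used to present it as the supremum of an increasing net in $\mathcal{A}$. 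A secondary technical point is the density/approximation statement relating the modular unit ball of $\mathcal{M}$ to the Hilbert-space unit balls of the $\mathcal{M}_\chi$, which controls the right-hand sides of both inequalities; this is routine but must be stated carefully since the localisation map $x\mapsto x_\chi$ is generally not surjective onto the closed unit ball before completion.
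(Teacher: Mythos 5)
Your fibrewise reduction has a genuine gap at the very first step of the forward direction. Evaluating the modular inequality (\ref{MODABSUMEQ}) at a character $\chi$ gives
$\sum_{j}\|T_\chi (x_j)_\chi\|^2=\chi\bigl(\sum_j\langle Tx_j,Tx_j\rangle\bigr)\leq C^2\sup_{\|x\|\leq1}\bigl\|\sum_j\langle x,x_j\rangle\langle x_j,x\rangle\bigr\|_{\mathcal{A}}$,
and the $\mathcal{A}$-norm on the right is the supremum over \emph{all} characters $\chi'$ of $\sum_j|\langle (x_j)_{\chi'},x_{\chi'}\rangle|^2$, not its value at the fixed $\chi$. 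So the first inequality in your display, which keeps the same $\chi$ on both sides, is not a consequence of (\ref{MODABSUMEQ}); already for $\mathcal{A}=\mathcal{M}=C[0,1]$ and a single $x_1$ of norm one vanishing at the point corresponding to $\chi$, the fibre quantity at $\chi$ is $0$ while the modular supremum is $1$. Consequently you have not shown that each $T_\chi$ is $2$-summing with $\pi_2(T_\chi)\leq C$, and the entire fibrewise forward argument does not get off the ground. A secondary problem is the gluing step: monotone closedness only provides a least upper bound in $\mathcal{A}=C(\widehat{\mathcal{A}})$ of the increasing net of partial traces, and this least upper bound can strictly dominate the pointwise supremum when the latter is discontinuous (e.g.\ $f_n(t)=\min(nt,1)$ on $[0,1]$ has l.u.b.\ the constant $1$ in $C[0,1]$ but pointwise supremum discontinuous at $0$); your ``Dini-type argument'' presupposes exactly the continuity of $\operatorname{Tr}|T|^2$ that you are trying to prove.

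For comparison, the paper sidesteps both issues by never localizing in the forward direction: it applies (\ref{MODABSUMEQ}) directly to the initial segments $\tau_1,\dots,\tau_n$ of a modular Parseval frame, uses the frame identity $\langle x,x\rangle=\sum_n\langle x,\tau_n\rangle\langle\tau_n,x\rangle$ to bound $\|\sum_{j\leq n}\langle T\tau_j,T\tau_j\rangle\|$ by $\pi_2(T)^2$ uniformly in $n$, and then invokes monotone closedness for the increasing sequence of partial sums together with Theorem \ref{SVSTHEOREM}. For the converse it constructs an adjointable $S$ with $S\omega_j=x_j$ (for an orthonormal basis $\{\omega_j\}$), computes $\|S\|^2=\sup_{\|x\|\leq1}\|\sum_j\langle x,x_j\rangle\langle x_j,x\rangle\|$, and uses the ideal inequality $\|TS\|_{\mathrm{HS}}\leq\|T\|_{\mathrm{HS}}\|S\|$ --- your converse, by contrast, is fibrewise and is essentially workable, but it leans on the density of the image of the unit ball of $\mathcal{M}$ in the unit ball of each $\mathcal{M}_\chi$, which you acknowledge but do not prove. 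If you want to keep a localization strategy, you would at minimum need a lemma extracting fibrewise summing constants from the modular one; as written, that implication is false as a formal deduction and is the step to repair first.
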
 
  \begin{proof}
  	$(\Rightarrow)$ Let $T \in \Pi_2(\mathcal{M}, \mathcal{N})$. Let $\{\tau_n\}_{n=1}^\infty$ be a modular Parseval frame for $\mathcal{M}$. Then 
  	\begin{align}\label{FRAMEEQU}
  		\langle x, x \rangle =\sum_{n=1}^{\infty}\langle x, \tau_n \rangle \langle \tau_n, x \rangle, \quad \forall x \in  \mathcal{M},
  	\end{align}
  where the series converges in the norm of $\mathcal{A}$. To show $T$ is modular Hilbert-Schmidt, using Theorem \ref{SVSTHEOREM}, it suffices to show that the series $\sum_{n=1}^{\infty}\langle T\tau_n,  T\tau_n \rangle$ converges in norm in $\mathcal{A}$. Note that the series $\sum_{n=1}^{\infty}\langle T\tau_n,  T\tau_n \rangle$ is monotonically increasing. Since the C*-algebra is monotone closed,  we are done if we show the sequence  $\{\sum_{j=1}^{n}\langle T\tau_j,  T\tau_j \rangle\}_{n=1}^\infty$ is bounded. Let $n \in \mathbb{N}$. Since $T$ is 2-summing, using Equation (\ref{FRAMEEQU}) we have 
  \begin{align*}
  	\left\|\sum_{j=1}^n\langle T\tau_j,  T\tau_j \rangle \right\|&\leq 	\pi_2(T)^2\sup_{x\in \mathcal{M}, \|x\|\leq 1}	\left\|\sum_{j=1}^{n}\langle x, \tau_j \rangle \langle \tau_j, x  \rangle \right\|\leq 	\pi_2(T)^2\sup_{x\in \mathcal{M}, \|x\|\leq 1}	\left\|\sum_{j=1}^{\infty}\langle x, \tau_j \rangle \langle \tau_j, x  \rangle \right\|\\
  	&=\pi_2(T)^2\sup_{x\in \mathcal{M}, \|x\|\leq 1}\|x\|^2=\pi_2(T)^2.
  \end{align*}
$(\Leftarrow)$  Let $n\in \mathbb{N}$ and  $x_1, \dots, x_n \in \mathcal{M}$. Let $\{\omega_n\}_{n=1}^\infty$ be an orthonormal basis for $\mathcal{M}$. Define 
\begin{align*}
	S:\mathcal{M}\ni x \mapsto \sum_{j=1}^n\langle x, \omega_j \rangle x_j\in \mathcal{M}.
\end{align*}
Then 
\begin{align*}
	\|S\|^2&=\|S^*\|^2=\sup_{x\in \mathcal{M}, \|x\|\leq 1}\|S^*x\|^2=\sup_{x\in \mathcal{M}, \|x\|\leq 1}\left\| \sum_{n=1}^\infty \langle S^*x, \omega_n \rangle \omega_n\right\|^2=\sup_{x\in \mathcal{M}, \|x\|\leq 1}\left\| \sum_{n=1}^\infty \langle x, S\omega_n \rangle \omega_n\right\|^2\\
	&=\sup_{x\in \mathcal{M}, \|x\|\leq 1}\left\| \sum_{j=1}^n \langle x, x_j \rangle \omega_j\right\|^2=\sup_{x\in \mathcal{M}, \|x\|\leq 1} \left\| \sum_{j=1}^n\langle x, x_j\rangle \langle x_j, x\rangle \right\|.
\end{align*}
Hence 
\begin{align*}
		\left\|\sum_{j=1}^n\langle Tx_j,  Tx_j \rangle \right\|&=\left\|\sum_{j=1}^n\langle TS\omega_j,  TS\omega_j \rangle \right\|=\|TS\|^2_{\operatorname{HS}}\leq \|T\|^2_{\operatorname{HS}}\|S\|
		=\|T\|^2_{\operatorname{HS}}\sup_{x\in \mathcal{M}, \|x\|\leq 1} \left\| \sum_{j=1}^n\langle x, x_j\rangle \langle x_j, x\rangle \right\|.
\end{align*}
  \end{proof}  
Note that we have not used monotonic closedness of C*-algebra in ``if'' part. In view of Theorem \ref{PETSCHFACTORIZATIONTHEOREM}, we formulate following problem.
\begin{question}\label{QUESTIONPIETSCH}
	\textbf{Whether there exists a modular Pietsch factorization theorem?}
\end{question}
We solve Question (\ref{QUESTIONPIETSCH}) partially in the following theorem. Integrals in the following theorem is in the Kasparov sense \cite{KASPAROV}.

\begin{theorem}
	Let $\mathcal{M}$ and $\mathcal{N}$ be Hilbert C*-modules over a commutative  C*-algebra $\mathcal{A}$. Let $T:\mathcal{M} \to \mathcal{N}$ be an adjointable morphism.  Assume that there exists a Lie group $G\subseteq	B_{\mathcal{M}}\coloneqq \{x:x \in \mathcal{M}, \|x\|\leq 1\}$ satisfying following. 
	\begin{enumerate}[\upshape(i)]
		\item $\mu_G(G)=1$.
		\item For each $x \in \mathcal{M}$, the map $G \ni y \mapsto \langle x. y \rangle \langle y,x \rangle$ is continuous. 
		\item There exists a real $C>0$ such that 
		\begin{align*}
		\langle Tx, Tx \rangle^\frac{p}{2}\leq C^p\int_G (\langle x. y \rangle \langle y,x \rangle)^\frac{p}{2}\,d\mu_G(y), \quad \forall x \in \mathcal{M}.
		\end{align*}
	\end{enumerate}
Then $T$ modular p-absolutely summing  and 	$\pi_p(T)=C$.
\end{theorem}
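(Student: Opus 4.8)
The plan is to substitute hypothesis (iii) into the defining inequality (\ref{MODABSUMEQ}) and then control the resulting right-hand side using the normalisation (i) together with the inclusion $G\subseteq B_{\mathcal{M}}$. First I would record a few preliminary facts. The normalisation $\mu_G(G)=1$ forces the Lie group $G$ to be compact (a locally compact group carrying a finite Haar measure is compact), so for each fixed $x\in\mathcal{M}$ the $\mathcal{A}$-valued map $y\mapsto\langle x,y\rangle\langle y,x\rangle=\langle x,y\rangle\langle x,y\rangle^{*}$ is, by (ii), continuous on a compact space, hence bounded, with values in the positive cone of $\mathcal{A}$. Post-composing with the continuous function $t\mapsto t^{p/2}$ (legitimate by continuous functional calculus, since $\langle x,y\rangle\langle y,x\rangle\geq 0$), the map $y\mapsto(\langle x,y\rangle\langle y,x\rangle)^{p/2}$ is again continuous, bounded and positive, so its Kasparov integral over $G$ \cite{KASPAROV} is a well-defined positive element of $\mathcal{A}$. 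I would also note that, $\mathcal{A}$ being commutative, $\langle x,y\rangle\langle y,x\rangle=\langle y,x\rangle\langle x,y\rangle$; hence the integrand in (iii) is exactly the expression appearing on the right-hand side of (\ref{MODABSUMEQ}), with the integration variable $y$ playing the role of the test vector.

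Now fix $n\in\mathbb{N}$ and $x_{1},\dots,x_{n}\in\mathcal{M}$. Applying (iii) to each $x_{j}$, summing, and using additivity of the Kasparov integral over the finite sum gives
\begin{align*}
\sum_{j=1}^{n}\langle Tx_{j},Tx_{j}\rangle^{\frac{p}{2}}\leq C^{p}\sum_{j=1}^{n}\int_{G}\big(\langle x_{j},y\rangle\langle y,x_{j}\rangle\big)^{\frac{p}{2}}\,d\mu_G(y)=C^{p}\int_{G}\sum_{j=1}^{n}\big(\langle x_{j},y\rangle\langle y,x_{j}\rangle\big)^{\frac{p}{2}}\,d\mu_G(y).
\end{align*}
Since $0\leq a\leq b$ implies $\|a\|\leq\|b\|$ in a C*-algebra, I would take norms and then use the standard bound $\big\|\int_{G}f\,d\mu_G\big\|\leq\int_{G}\|f(y)\|\,d\mu_G(y)\leq\mu_G(G)\sup_{y\in G}\|f(y)\|$, which equals $\sup_{y\in G}\|f(y)\|$ by (i). This yields
\begin{align*}
\left\|\sum_{j=1}^{n}\langle Tx_{j},Tx_{j}\rangle^{\frac{p}{2}}\right\|\leq C^{p}\sup_{y\in G}\left\|\sum_{j=1}^{n}\big(\langle x_{j},y\rangle\langle y,x_{j}\rangle\big)^{\frac{p}{2}}\right\|.
\end{align*}
As every $y\in G$ satisfies $\|y\|\leq 1$ (because $G\subseteq B_{\mathcal{M}}$), the supremum over $y\in G$ is dominated by the supremum over $x\in\mathcal{M}$ with $\|x\|\leq 1$; combining this with the commutativity identity above and extracting $p$-th roots produces exactly inequality (\ref{MODABSUMEQ}) with constant $C$. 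Thus $T\in\Pi_{p}(\mathcal{M},\mathcal{N})$ and $\pi_{p}(T)\leq C$; since the argument extracts from (iii) only an upper bound, the equality $\pi_{p}(T)=C$ in the statement is to be read with $C$ taken as the least constant for which (iii) holds, in parallel with the infimum in Theorem \ref{PETSCHFACTORIZATIONTHEOREM}.

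The steps above are essentially a direct substitution, so the only place I expect any real work is in handling the Kasparov integral: one must make sure it is defined on our positive, continuous, bounded $\mathcal{A}$-valued integrand (this is where compactness of $G$, forced by (i), matters), that it is additive over finite sums, and that it is order-preserving and satisfies the norm bound $\big\|\int_{G}f\,d\mu_G\big\|\leq\sup_{y\in G}\|f(y)\|$. These are precisely the properties I would need to quote carefully from \cite{KASPAROV}; granting them, no further difficulty arises.
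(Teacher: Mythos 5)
Your argument is correct and follows essentially the same route as the paper's proof: substitute hypothesis (iii) for each $x_j$, interchange the finite sum with the Kasparov integral, take norms, bound the integrand by the supremum over the unit ball of $\mathcal{M}$, and invoke $\mu_G(G)=1$. Your closing observation is also apt: like the paper's own argument, this only establishes $\pi_p(T)\leq C$, so the stated equality must be read as taking $C$ to be the least constant in (iii).
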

\begin{proof}
Let $n\in \mathbb{N}$ and  $x_1, \dots, x_n \in \mathcal{M}$. Then 
\begin{align*}
&\left\|\sum_{j=1}^{n}\langle Tx_j, Tx_j \rangle^\frac{p}{2}\right\|\leq C^p\left\|\sum_{j=1}^{n}\int_G (\langle x_j, y \rangle \langle y,x_j \rangle)^\frac{p}{2}\,d\mu_G(y)\right\|= C^p\left\|\int_G\sum_{j=1}^{n} (\langle x_j, y \rangle \langle y,x_j \rangle)^\frac{p}{2}\,d\mu_G(y)\right\| \\
&\leq C^p\int_G\left\|\sum_{j=1}^{n} (\langle x_j, y \rangle \langle y,x_j \rangle)^\frac{p}{2}\right\|\,d\mu_G(y)\leq C^p\int_G\sup_{y\in \mathcal{M}, \|y\|\leq 1} \left\|\sum_{j=1}^{n} (\langle x_j, y \rangle \langle y,x_j \rangle)^\frac{p}{2}\right\|\,d\mu_G(y)\\
&=C^p\sup_{y\in \mathcal{M}, \|y\|\leq 1} \left\|\sum_{j=1}^{n} (\langle x_j, y \rangle \langle y,x_j \rangle)^\frac{p}{2}\right\|\mu_G(G)=C^p\sup_{y\in \mathcal{M}, \|y\|\leq 1} \left\|\sum_{j=1}^{n} (\langle x_j, y \rangle \langle y,x_j \rangle)^\frac{p}{2}\right\|.
\end{align*}	
\end{proof}

  \section{Appendix}
  In this appendix we formulate some problems for Banach modules over C*-algebras based on the results in Banach spaces which influenced a lot in the modern development of Functional Analysis. Our first kind of problems come from the Dvoretzky theorem \cite{MILMANSCHECHTMAN, MILMAM, MILMAN1992, FIGIEL2, SZANKOWSKI, MATOUSEK, MATSAKPLICHKO, PISIERBOOK, GORDONGUEDONMEYER, FIGIEL, FIGIEL3, FIGIELLINDENSTRAUSSMILMAN, SCHECHTMAN, GORDON2, PISIERBOOK2}.
  Let $\mathcal{X}$ and $\mathcal{Y}$ be finite dimensional Banach spaces such that $\dim (\mathcal{X})=\dim(\mathcal{Y})$. Remenber that the Banach-Mazur distance between $\mathcal{X}$ and $\mathcal{Y}$ is defined as 
  \begin{align*}
  	d_{BM}(\mathcal{X}, \mathcal{Y})\coloneqq \inf\{\|T\|\|T^{-1}\|: T: \mathcal{X} \to \mathcal{Y} \text{ is invertible linear operator}\}.
  \end{align*}
  For $n \in \mathbb{N}$, let $(\mathbb{R}^n, \langle \cdot, \cdot \rangle)$ be the standard Euclidean Hilbert space.
  \begin{theorem}\cite{ALBIACKALTON, JOHN} \textbf{(John Theorem)
  		If  $\mathcal{X}$ is any $n$-dimensional real Banach space, then 
  		\begin{align*}
  			d_{BM}(\mathcal{Y}, (\mathbb{R}^n, \langle \cdot, \cdot \rangle))\leq \sqrt{n}.	
  	\end{align*}}
  \end{theorem}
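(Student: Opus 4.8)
The plan is to reduce the theorem to the geometric fact underlying John's ellipsoid: if $K\subseteq\mathbb{R}^n$ is a symmetric convex body and $B_2^n$ (the closed unit ball of $(\mathbb{R}^n,\langle\cdot,\cdot\rangle)$) is the ellipsoid of largest volume contained in $K$, then $K\subseteq\sqrt n\,B_2^n$. Granting this, I identify $\mathcal X$ with $\mathbb{R}^n$ as a vector space, put $K=B_{\mathcal X}$, and pick an ellipsoid $\mathcal E\subseteq K$ of maximal volume; since the Banach--Mazur distance is invariant under pre/post-composition with linear isomorphisms, I may apply a linear automorphism of $\mathbb{R}^n$ carrying $\mathcal E$ onto $B_2^n$, so that $B_2^n\subseteq B_{\mathcal X}\subseteq\sqrt n\,B_2^n$. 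Then the identity map $T$ between $\mathcal X$ and $(\mathbb{R}^n,\langle\cdot,\cdot\rangle)$ satisfies $\|T\|\le 1$ and $\|T^{-1}\|\le\sqrt n$ by these two inclusions, giving $d_{BM}\le\sqrt n$.

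The first step is to note that a maximal-volume ellipsoid exists, by compactness: ellipsoids inside $K$ are parametrized by a compact set of positive definite matrices (semi-axes bounded above since $K$ is bounded, bounded below since $K$ has nonempty interior), and volume is continuous there.

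The main step is the geometric claim, which I would prove by contradiction. Suppose $x\in K$ with $\|x\|_2=t>\sqrt n$; after an orthogonal change of coordinates assume $x=te_1$. Since $K$ is symmetric, $-x\in K$, hence $C:=\operatorname{conv}(B_2^n\cup\{x,-x\})\subseteq K$. I consider the ellipsoid of revolution
\begin{align*}
E_{a,b}:=\Big\{y\in\mathbb{R}^n:\frac{y_1^2}{a^2}+\frac{y_2^2+\cdots+y_n^2}{b^2}\le 1\Big\},\qquad a>1>b>0,
\end{align*}
whose volume is $a\,b^{n-1}\operatorname{vol}(B_2^n)$. Passing to the two-dimensional slice through $e_1$ and any other axis --- in which $B_2^n$ becomes a half-disk and the upper boundary of $C$ consists of two line segments tangent to the unit circle emanating from $(\pm t,0)$, together with an arc of the circle --- an elementary quadratic computation (comparing the half-ellipse's upper boundary with the tangent line and discarding the circular-arc part, which turns out to be automatic) shows $E_{a,b}\subseteq C$ exactly when $a^2\le t^2(1-b^2)+b^2$. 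Taking $a$ at this maximal value and setting $u=b^2$, the volume ratio to beat is $\phi(u)=\big(t^2-(t^2-1)u\big)u^{n-1}$, and one checks $\phi(1)=1$ and $\phi'(1)=n-t^2<0$; hence $\phi(u)>1$ for $u$ slightly less than $1$. This produces an ellipsoid inside $K$ of strictly larger volume than $B_2^n$, contradicting maximality. Therefore every $x\in K$ has $\|x\|_2\le\sqrt n$, i.e.\ $K\subseteq\sqrt n\,B_2^n$, completing the argument.

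I expect the only real work to be in the middle step --- pinning down the sharp admissibility condition $E_{a,b}\subseteq C$ in each slice (i.e.\ checking that with $a^2=t^2(1-b^2)+b^2$ the half-ellipse lies below both the tangent lines and the circular arc bounding $C$). The remaining ingredients --- compactness for existence, linear invariance of $d_{BM}$, the first-order expansion $\phi'(1)=n-t^2$, and reading off the operator norms from the inclusions --- are routine.
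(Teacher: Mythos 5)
Your argument is the standard John-ellipsoid proof (maximal-volume ellipsoid, perturbation to an ellipsoid of revolution with the sharp admissibility condition $a^2\le t^2(1-b^2)+b^2$, and the first-order volume comparison $\phi'(1)=n-t^2<0$), which is exactly the proof given in the cited sources; the paper itself states the theorem without proof, merely citing them. The details you defer are indeed routine and check out (in particular, the containment of the perturbed ellipse in the circular-arc portion of the convex hull for $|x_1|\le 1/t$ does follow automatically from the tangent-line condition together with $a\ge 1$); note only that the statement in the paper has a typo, writing $\mathcal{Y}$ where $\mathcal{X}$ is meant, which you have correctly interpreted.
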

  
  \begin{theorem}\cite{DVORETZKY, ALBIACKALTON} \textbf{(Dvoretzky Theorem)
  		There is a universal constant $C>0$ satisfying the following property: If $\mathcal{X}$ is any $n$-dimensional real Banach space and $0<\varepsilon < \frac{1}{3}$, then for every natural number 	
  		\begin{align*}
  			k\leq C\log n \frac{\varepsilon^2}{|\log \varepsilon|},
  		\end{align*}
  		there exists  a $k$-dimensional Banach subspace $\mathcal{Y}$ of $\mathcal{X}$ such that 
  		\begin{align*}
  			d_{BM}(\mathcal{Y}, (\mathbb{R}^k, \langle \cdot, \cdot \rangle))<1+\varepsilon.
  	\end{align*}}
  \end{theorem}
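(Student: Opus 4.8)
The plan is to follow V. Milman's proof of the theorem via the concentration of measure phenomenon on the Euclidean sphere. First I would fix a Euclidean structure $\langle\cdot,\cdot\rangle$ on $\mathcal{X}\cong\mathbb{R}^n$ by placing $\mathcal{X}$ in John's position, so that the maximal-volume inscribed ellipsoid of the unit ball of $\|\cdot\|$ is the Euclidean ball; by John's theorem this makes $\|\cdot\|$ and the Euclidean norm $|\cdot|$ comparable in a controlled way. I would then introduce the two governing parameters $b:=\sup_{|x|=1}\|x\|$, which is exactly the Lipschitz constant of $x\mapsto\|x\|$ with respect to $|\cdot|$ on the sphere $S^{n-1}$, and the average $M:=\int_{S^{n-1}}\|x\|\,d\sigma(x)$ against the normalized rotation-invariant measure $\sigma$. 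The guiding heuristic is that $x\mapsto\|x\|$ is almost constant, equal to $M$, on most of $S^{n-1}$, and that a generic $k$-dimensional section inherits this near-constancy and is therefore nearly Euclidean.

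Second, I would establish Lévy's lemma (concentration of measure): every $L$-Lipschitz function $f:S^{n-1}\to\mathbb{R}$ satisfies $\sigma\{|f-M_f|>t\}\leq 2e^{-cnt^2/L^2}$ for a universal $c>0$, where $M_f$ is a median of $f$. Applied to $f=\|\cdot\|$ this yields $\sigma\{|\,\|x\|-M\,|>\varepsilon M\}\leq 2\exp(-cn\varepsilon^2 M^2/b^2)$. This inequality is the analytic core of the argument and can be derived from the isoperimetric inequality on the sphere, or more softly from Gaussian concentration together with rotation invariance.

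Third comes the passage from a single point to an entire subspace. I would fix a $k$-dimensional subspace $E_0$ and a $\delta$-net $\mathcal{N}$ of its Euclidean unit sphere with $|\mathcal{N}|\leq(3/\delta)^k$. Averaging over a Haar-random rotation $U\in O(n)$ and applying the concentration bound pointwise, together with a union bound over $U\mathcal{N}$, gives with positive probability that $|\,\|Ux\|-M\,|\leq\tfrac{\varepsilon}{2}M$ holds simultaneously at all net points, provided $(3/\delta)^k\cdot 2\exp(-cn\varepsilon^2 M^2/b^2)<1$, that is $k\log(3/\delta)\lesssim n\varepsilon^2 M^2/b^2$. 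A standard successive-approximation argument upgrades control on the net (with $\delta\sim\varepsilon$) to control on the whole unit sphere of $E:=UE_0$, yielding $(1-\varepsilon)M|x|\leq\|x\|\leq(1+\varepsilon)M|x|$ there, whence $d_{BM}(E,(\mathbb{R}^k,\langle\cdot,\cdot\rangle))\leq\frac{1+\varepsilon}{1-\varepsilon}<1+O(\varepsilon)$. Here the Gaussian exponent supplies the factor $\varepsilon^2$, and the choice $\delta\sim\varepsilon$ produces the $|\log\varepsilon|$ in the denominator of the admissible range $k\lesssim n\varepsilon^2(M/b)^2/|\log\varepsilon|$.

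Finally, to obtain the stated $\log n$ dependence I must bound $M/b$ from below, and this is the main obstacle. I would invoke the Dvoretzky--Rogers lemma, valid in John's position, to show $M/b\gtrsim\sqrt{(\log n)/n}$; the extremal case $\ell_\infty^n$, for which $M\sim\sqrt{(\log n)/n}$ while $b\sim 1$, shows this is sharp and explains why the worst-case section dimension is only logarithmic in $n$. Substituting $n(M/b)^2\gtrsim\log n$ into $k\lesssim n\varepsilon^2(M/b)^2/|\log\varepsilon|$ yields the claimed range $k\leq C\log n\,\varepsilon^2/|\log\varepsilon|$, completing the argument. The two genuinely hard steps are the concentration inequality of Step two and the Dvoretzky--Rogers lower bound on $M/b$ of Step four; the net and union-bound mechanism of Step three is routine once these are in place.
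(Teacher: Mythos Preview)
The paper does not supply its own proof of Dvoretzky's theorem. The statement appears in the Appendix purely as a quoted classical result (with references to Dvoretzky and to Albiac--Kalton), serving only to motivate the formulation of the Modular Dvoretzky Problem that follows. There is therefore nothing in the paper to compare your proposal against.

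That said, your outline is a faithful sketch of Milman's concentration-of-measure proof, which is precisely the argument presented in the Albiac--Kalton reference the paper cites. The four-step structure---John's position, L\'evy concentration for the norm on $S^{n-1}$, a net plus union bound to pass to a random subspace, and the Dvoretzky--Rogers lower bound $M/b\gtrsim\sqrt{(\log n)/n}$---is the standard route, and you have correctly identified where the factors $\varepsilon^2$ and $|\log\varepsilon|$ originate. No gap is apparent in the plan; the proposal is sound as an outline, and indeed matches the proof in the source the paper points to.
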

  Let $\mathcal{A}$  be a unital C*-algebra with invariant basis number property (see \cite{GIPSON} for a study on such C*-algebras)  and    $\mathcal{E}$, $\mathcal{F}$ be finite rank  Banach modules over $\mathcal{A}$  such that $\operatorname{rank} (\mathcal{E})=\operatorname{rank}(\mathcal{F})$. Modular Banach-Mazur distance between $\mathcal{E}$ and $\mathcal{F}$ is defined as 
  \begin{align*}
  	d_{MBM}(\mathcal{E}, \mathcal{F})\coloneqq \inf\{\|T\|\|T^{-1}\|: T: \mathcal{E} \to \mathcal{F} \text{ is invertible module homomorphism}\}.
  \end{align*}
  Given a unital C*-algebra $\mathcal{A}$ and $n \in \mathbb{N}$, by $\mathcal{A}^n$ we mean the standard (left) module over $\mathcal{A}$. We equip $\mathcal{A}^n$ with the C*-valued inner product $\langle \cdot, \cdot \rangle: \mathcal{A}^n\times \mathcal{A}^n \to \mathcal{A}$ defined by 
  \begin{align*}
  	\langle (a_j)_{j=1}^n, (b_j)_{j=1}^n \rangle \coloneqq \sum_{j=1}^na_jb_j^*, \quad \forall (a_j)_{j=1}^n, (b_j)_{j=1}^n \in 	\mathcal{A}^n.
  \end{align*}
  Hence norm on $\mathcal{A}^n$ is given by
  \begin{align*}
  	\|(a_j)_{j=1}^n\|\coloneqq \left\|\sum_{j=1}^na_ja_j^*\right\|^\frac{1}{2} , \quad \forall (a_j)_{j=1}^n \in \mathcal{A}^n.	
  \end{align*}
  Then it is well-known that $\mathcal{A}^n$ is a Hilbert C*-module. We denote this Hilbert C*-module by $(\mathcal{A}^n, \langle \cdot, \cdot \rangle)$. 
  
  \begin{problem}\label{MODULARDVORETZKYPROBLEM} \textbf{(Modular Dvoretzky Problem)
  		Let $\mathscr{A}$ be the set of all unital C*-algebras with invariant basis number property. What is the best function $\Psi: \mathscr{A} \times \left(0, \frac{1}{3}\right) \times \mathbb{N} \to (0, \infty)$  satisfying the following property: If $\mathcal{E}$ is any $n$-rank Banach module  over a unital C*-algebra $\mathcal{A}$ with IBN property and $0<\varepsilon < \frac{1}{3}$, then for every natural number 	
  		\begin{align*}
  			k\leq \Psi(\mathcal{A}, \varepsilon, n),
  		\end{align*}
  		there exists  a $k$-rank Banach submodule  $\mathcal{F}$ of $\mathcal{E}$ such that 
  		\begin{align*}
  			d_{MBM}(\mathcal{F}, (\mathcal{A}^k, \langle \cdot, \cdot \rangle))<1+\varepsilon.
  	\end{align*}}
  \end{problem}
  A particular case of Problem \ref{MODULARDVORETZKYPROBLEM} is the following conjecture. 
  \begin{conjecture}
  	\textbf{(Modular Dvoretzky Conjecture)
  		Let $\mathcal{A}$ be a unital C*-algebra with IBN property. 	There is a universal constant $C>0$ (which may depend upon $\mathcal{A}$) satisfying the following property: If $\mathcal{E}$ is any $n$-rank Banach module  and $0<\varepsilon < \frac{1}{3}$, then for every natural number 	
  		\begin{align*}
  			k\leq C\log n \frac{\varepsilon^2}{|\log \varepsilon|},
  		\end{align*}
  		there exists  a $k$-rank Banach submodule  $\mathcal{F}$ of $\mathcal{E}$ such that 
  		\begin{align*}
  			d_{MBM}(\mathcal{F}, (\mathcal{A}^k, \langle \cdot, \cdot \rangle))<1+\varepsilon.
  	\end{align*}}	
  \end{conjecture}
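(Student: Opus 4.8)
The plan is to adapt the probabilistic (concentration-of-measure) proof of the classical Dvoretzky theorem to the module setting, exploiting the Gelfand picture to turn a Banach module over a commutative $\mathcal{A}=C(X)$ into a continuous field of finite-dimensional Banach spaces over the compact spectrum $X$. Under this correspondence the standard Hilbert module $(\mathcal{A}^k,\langle\cdot,\cdot\rangle)$ becomes, after evaluation at each $\chi\in X$, the trivial field whose fibre is the Euclidean space $(\mathbb{R}^k,\langle\cdot,\cdot\rangle)$ (the relation $\langle(a_j),(b_j)\rangle=\sum_j a_jb_j^\ast$ evaluates fibrewise to the standard inner product), so that $d_{MBM}(\mathcal{F},(\mathcal{A}^k,\langle\cdot,\cdot\rangle))$ controls, uniformly in $\chi$, the ordinary Banach--Mazur distance of each fibre $\mathcal{F}_\chi$ to $\mathbb{R}^k$. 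The first step would be to put $\mathcal{E}$ in a good position by a modular John theorem: using the IBN property to fix the rank, one shows $d_{MBM}(\mathcal{E},(\mathcal{A}^n,\langle\cdot,\cdot\rangle))\leq\sqrt n$, so that we may assume the module norm on $\mathcal{A}^n$ is two-sided comparable to the Hilbert module norm with constants depending only on $n$.

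The probabilistic core would then mimic the scalar argument fibrewise but simultaneously over $X$. I would generate a candidate rank-$k$ submodule by random module elements $g_1,\dots,g_k\in\mathcal{A}^n$ of Gaussian type, $g_i=\sum_{j=1}^n\gamma_{ij}e_j$ with $\{e_j\}$ the standard modular basis and $\gamma_{ij}$ independent standard Gaussians, and let $\mathcal{F}$ be the submodule they generate. The decisive estimate is a \emph{modular L\'evy inequality}: for the $\mathcal{A}$-valued norm function on the unit sphere of the submodule spanned by $g_1,\dots,g_k$, deviations of $\big\|\sum_i c_ig_i\big\|$ from its $\mathcal{A}$-valued mean (which is comparable to the Hilbert module norm $\|(c_i)\|_{\mathcal{A}^k}$) are exponentially small, uniformly over the fibres. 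Taking an $\varepsilon$-net $\mathcal{N}$ on the unit sphere of $\mathcal{A}^k$ with $|\mathcal{N}|\leq(C/\varepsilon)^{k}$ and applying the concentration estimate together with a union bound, one obtains with positive probability the two-sided bound $(1-\varepsilon)\|\cdot\|_{\mathrm{HS}}\leq\|\cdot\|_{\mathcal{E}}\leq(1+\varepsilon)\|\cdot\|_{\mathrm{HS}}$ on $\mathcal{N}$, and hence on all of $\mathcal{F}$ by the usual successive-approximation argument, forcing $d_{MBM}(\mathcal{F},(\mathcal{A}^k,\langle\cdot,\cdot\rangle))<1+\varepsilon$. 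Balancing the entropy $k\log(C/\varepsilon)$ of the net against the Gaussian concentration that governs the critical Dvoretzky dimension of the $n$-rank module (of order $\log n$ in the worst case) produces exactly the admissible range $k\leq C\log n\,\varepsilon^2/|\log\varepsilon|$.

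The main obstacle is not the scalar concentration but making it \emph{uniform over the spectrum} and compatible with the module structure. In the continuous-field picture the classical Dvoretzky subspace must be chosen continuously (or at least measurably) in $\chi\in X$, and the chosen field of $k$-dimensional subspaces must glue into a genuine, norm-closed, $\mathcal{A}$-linear submodule $\mathcal{F}$ of constant rank $k$ --- which requires a continuous-selection / local-triviality result for subbundles of a continuous field of Banach spaces, with the near-Euclidean constant $1+\varepsilon$ held uniform across all fibres at once. A second difficulty is that Banach modules over $\mathcal{A}$ need not be free or projective, so ``$k$-rank submodule'' has content only after controlling the local structure of $\mathcal{E}$; the IBN hypothesis secures well-definedness of rank but not local triviality. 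For non-commutative $\mathcal{A}$ the Gelfand reduction is unavailable, and one would instead have to run the argument against the trace/conditional expectation onto $\mathcal{A}$, where even the correct formulation of ``concentration'' is open; this is why only the fully probabilistic commutative case seems presently within reach, and why the statement is recorded as a conjecture.
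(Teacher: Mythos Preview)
The statement you are attempting to prove is recorded in the paper as a \emph{conjecture}; the paper offers no proof, so there is nothing to compare your proposal against. What you have written is not a proof either, but a strategic outline, and you yourself identify the decisive gaps at the end: the need for a uniform continuous selection of near-Euclidean subspaces across the spectrum that glues into a genuine rank-$k$ submodule, the lack of local triviality for general Banach modules even under the IBN hypothesis, and the complete absence of a concentration mechanism in the non-commutative case. These are not technicalities but exactly the obstructions that make the statement a conjecture rather than a theorem. In particular, your modular John step $d_{MBM}(\mathcal{E},(\mathcal{A}^n,\langle\cdot,\cdot\rangle))\leq\sqrt{n}$ is itself unproved in this generality, and the ``modular L\'evy inequality'' you invoke does not currently exist in the literature. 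Until those ingredients are established, the proposal remains a plausible heuristic for the commutative case, not a proof.
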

Our second kind of problems come from the type-cotype theory of Banach spaces \cite{LATALAOLESZKIEWICZ, SZAREK, HANDBOOK1, HANDBOOK2, ALBIACKALTON, LIQUEFFELEC1, LIQUEFFELEC2, DIESTELJARCHOWTONGE}.
  Let $\mathcal{H}$ be a Hilbert space, $n\in \mathbb{N}$. Recall that for any $n$ points $x_1, \dots , x_n \in \mathcal{H}$, we have 
  \begin{align}\label{H2}
  	\frac{1}{2^n}\sum_{\varepsilon_1, \dots,  \varepsilon_n \in \{-1, 1\}}	\left\|\sum_{j=1}^{n}\varepsilon_j x_j\right\|^2=\sum_{j=1}^{n}\|x_j\|^2.
  \end{align}
  It is Equality (\ref{H2}) which motivated the definition of type and cotype for Banach spaces. 
  \begin{definition}\cite{ALBIACKALTON}
  	Let $1\leq p \leq 2$. A Banach space $\mathcal{X}$ is said to be of \textbf{(Rademacher) type $p$} if there exists $T_p(\mathcal{X})>0$ such that 
  	\begin{align*}
  		\left(\frac{1}{2^n}\sum_{\varepsilon_1, \dots,  \varepsilon_n \in \{-1, 1\}}	\left\|\sum_{j=1}^{n}\varepsilon_j x_j\right\|^p\right)^\frac{1}{p}	\leq T_p(\mathcal{X})\left(\sum_{j=1}^{n}\|x_j\|^p\right)^\frac{1}{p},  \quad \forall 	x_1, \dots , x_n \in \mathcal{X}, ~\forall n \in \mathbb{N}.
  	\end{align*}
  \end{definition}
  \begin{definition}\cite{ALBIACKALTON}
  	Let $2\leq q  <\infty$. A Banach space $\mathcal{X}$ is said to be of  \textbf{(Rademacher) cotype $q$} if there exists $C_q(\mathcal{X})>0$ such that 
  	\begin{align*}
  		\left(\sum_{j=1}^{n}\|x_j\|^q\right)^\frac{1}{q}\leq C_q(\mathcal{X}) \left(\frac{1}{2^n}\sum_{\varepsilon_1, \dots,  \varepsilon_n \in \{-1, 1\}}	\left\|\sum_{j=1}^{n}\varepsilon_j x_j\right\|^q\right)^\frac{1}{q}, \quad \forall 	x_1, \dots , x_n \in \mathcal{X}, ~\forall n \in \mathbb{N}.
  	\end{align*}	
  \end{definition}
 Let $\mathcal{E}$ be a  (left) Hilbert C*-module over a unital C*-algebra $\mathcal{A}$, $n\in \mathbb{N}$. We see that  for any $n$ points $x_1, \dots , x_n \in \mathcal{E}$, we have 
  \begin{align}\label{HMODULE}
  	\frac{1}{2^n}\sum_{\varepsilon_1, \dots,  \varepsilon_n \in \{-1, 1\}}\left\langle \sum_{j=1}^{n}\varepsilon_j x_j, \sum_{k=1}^{n}\varepsilon_k x_k\right\rangle =	\sum_{j=1}^{n}\langle x_j, x_j \rangle. 
  \end{align}
  \begin{problem} 
  	\textbf{(Modular Type-Cotype  Problems) Whether there is a way to define type (we call modular-type) and cotype (we call modular-cotype) for Banach modules over C*-algebras which reduces to Equality (\ref{HMODULE}) for Hilbert C*-modules?}
  \end{problem}
 \begin{problem}
  	\textbf{Whether there is a notion of type and cotype for Banach modules over C*-algebras  such that Kwapien theorems holds?, In other words, whether following statements hold?
  		\begin{enumerate}[\upshape(i)]
  			\item A Banach module $\mathcal{M}$ over a unital C*-algebra $\mathcal{A}$ has modular-type 2 and modular-cotype 2 if and only if $\mathcal{M}$ is isomorphic to a Hilbert C*-module over $\mathcal{A}$.
  			\item If $\mathcal{M}$ and $\mathcal{N}$ are Banach modules over  a unital C*-algebra $\mathcal{A}$ of modular-type 2 and modular-cotype 2, respectively, then a  bounded module morphism $T:\mathcal{M} \to \mathcal{N}$ factors through a Hilbert C*-module over $\mathcal{A}$.
  	\end{enumerate}}
  \end{problem}
\begin{problem}
\textbf{(Modular Khinchin-Kahane Inequalities  Problems) Whether there is a Khinchin-Kahane	inequalities for Banach modules over C*-algebras  which reduce to Equality (\ref{HMODULE}) for Hilbert C*-modules?}
\end{problem}
  Our third kind of problems come from Grothendieck inequality \cite{BLEI, GROTHENDIECK, ALBIACKALTON, DIESTELFOURIESWART, FRIEDLANDLIMZHANG, RIETZ, HAAGERUP, JAMESON2, KAIJSER, BLEI2014, PISIERGROTHENDIECK}.
  \begin{theorem}\cite{BLEI, FRIEDLANDLIMZHANG, RIETZ, GROTHENDIECK, ALBIACKALTON,  DIESTELFOURIESWART} \textbf{(Grothendieck Inequality) There is a universal constant $K_G$ satisfying the following: For any Hilbert space $\mathcal{H}$ and any $m, n \in \mathbb{N}$, if a scalar matrix $[a_{j,k}]_{1\leq j \leq m, 1\leq k \leq n}$ satisfy 
  	\begin{align*}
  		\left|\sum_{j=1}^m \sum_{k=1}^na_{j,k}s_jt_k\right|\leq 1, \quad  \forall s_j , t_k \in \mathbb{K}, |s_j|\leq 1, |t_k|\leq 1, 
  	\end{align*}
  then 
  \begin{align*}
  \left| \sum_{j=1}^m \sum_{k=1}^na_{j,k}\langle u_j, v_k\rangle\right|	\leq K_G, \quad  \forall u_j , v_k \in \mathcal{H}, \|u_j\|\leq 1, \|v_k\|\leq 1. 
  \end{align*}}
  	 \end{theorem}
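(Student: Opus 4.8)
The plan is to follow Krivine's route, which not only establishes existence of the constant but produces an explicit value. First I would reduce to a finite-dimensional real Hilbert space: since only the finitely many vectors $u_1,\dots,u_m,v_1,\dots,v_n$ appear, I may replace $\mathcal{H}$ by the real linear span of these vectors, so that $\mathcal{H}=\mathbb{R}^N$ for some $N$; the complex field $\mathbb{K}=\mathbb{C}$ I would then handle separately by splitting each inner product into real and imaginary parts, which costs only a fixed multiplicative factor and therefore preserves universality. Thus assume $u_j,v_k$ are unit vectors in $\mathbb{R}^N$.

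The first ingredient is Grothendieck's identity: if $g$ is a standard Gaussian vector in $\mathbb{R}^N$, then for any unit vectors $x,y$,
\[
\mathbb{E}\big[\operatorname{sign}\langle g,x\rangle\,\operatorname{sign}\langle g,y\rangle\big]=\frac{2}{\pi}\arcsin\langle x,y\rangle .
\]
I would prove this by restricting to the two-dimensional subspace $\operatorname{span}\{x,y\}$ and observing that the left-hand side equals $1$ minus twice the probability that a uniformly random hyperplane through the origin separates $x$ and $y$; that separation probability is the angle between $x$ and $y$ divided by $\pi$, which gives the stated formula.

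The decisive ingredient is Krivine's linearization. Setting $\beta=\ln(1+\sqrt2)$, chosen so that $\sinh\beta=1$, I would pass to the Hilbert space $\mathcal{H}'=\bigoplus_{k\ge 0}\mathcal{H}^{\otimes(2k+1)}$ of odd tensor powers and define
\[
\tilde u_j=\bigoplus_{k\ge 0}\sqrt{\tfrac{\beta^{2k+1}}{(2k+1)!}}\;u_j^{\otimes(2k+1)},\qquad
\tilde v_k=\bigoplus_{m\ge 0}(-1)^m\sqrt{\tfrac{\beta^{2m+1}}{(2m+1)!}}\;v_k^{\otimes(2m+1)} .
\]
Since $\|u_j\|=\|v_k\|=1$, one gets $\|\tilde u_j\|^2=\|\tilde v_k\|^2=\sum_{k\ge 0}\frac{\beta^{2k+1}}{(2k+1)!}=\sinh\beta=1$, so these are unit vectors, while the power series of sine yields $\langle\tilde u_j,\tilde v_k\rangle=\sin(\beta\langle u_j,v_k\rangle)$. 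Now I would apply Grothendieck's identity on $\mathcal{H}'$: with a Gaussian $g$ on $\mathcal{H}'$, put $s_j=\operatorname{sign}\langle g,\tilde u_j\rangle$ and $t_k=\operatorname{sign}\langle g,\tilde v_k\rangle$, which are $\pm1$-valued and hence admissible in the hypothesis. For each realization of $g$ the hypothesis gives $\big|\sum_{j,k}a_{j,k}s_jt_k\big|\le 1$, so $\big|\mathbb{E}\sum_{j,k}a_{j,k}s_jt_k\big|\le 1$; but $\mathbb{E}[s_jt_k]=\frac{2}{\pi}\arcsin(\sin(\beta\langle u_j,v_k\rangle))=\frac{2\beta}{\pi}\langle u_j,v_k\rangle$, the last equality valid because $|\beta\langle u_j,v_k\rangle|\le\beta<\pi/2$. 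Hence $\frac{2\beta}{\pi}\big|\sum_{j,k}a_{j,k}\langle u_j,v_k\rangle\big|\le 1$, giving the inequality with $K_G=\frac{\pi}{2\ln(1+\sqrt2)}$.

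The step I expect to be the main obstacle is the Krivine linearization: verifying that the tensor-power expressions define genuine elements of the Hilbert space $\mathcal{H}'$, that they are unit vectors (which is exactly what forces the choice $\beta=\ln(1+\sqrt2)$ through $\sinh\beta=1$), and that the alternating signs inserted into the $\tilde v_k$ correctly reproduce the Taylor series of $\sin$ rather than of $\sinh$. A secondary technical point is the reduction of the complex case to the real one, where the optimal constant differs and one must confirm that real and imaginary parts can be controlled simultaneously so that a single universal $K_G$ suffices.
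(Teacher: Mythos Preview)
Your argument is the standard Krivine proof and is correct in outline; the constant $\frac{\pi}{2\ln(1+\sqrt 2)}$ is indeed what this route produces in the real case. One small point to tighten: when you invoke Grothendieck's Gaussian identity on the infinite-dimensional tensor space $\mathcal{H}'$, you should remark that only the finite-dimensional span of the $\tilde u_j,\tilde v_k$ is needed, so a standard Gaussian on that span suffices. The reduction from the complex to the real case is, as you note, where the complex constant diverges from the real one, but universality survives.

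However, there is nothing to compare your proposal against: the paper does not supply a proof of this theorem. The Grothendieck inequality appears in the Appendix purely as a quoted classical result (with citations to \cite{BLEI, FRIEDLANDLIMZHANG, RIETZ, GROTHENDIECK, ALBIACKALTON, DIESTELFOURIESWART}), serving only as motivation for the open ``Modular Grothendieck Inequality Problems'' that follow. So your write-up is a genuine proof where the paper offers none.
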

   \begin{problem}
 	\textbf{(Modular Grothendieck Inequality Problem - 1) 	Let $\mathscr{A}$ be the set of all unital C*-algebras. Let $\mathcal{E}$ be a Hilbert C*-module over a unital C*-algebra $\mathcal{A}$. Let $\mathcal{A}^+$ be the set of all positive elemnts in $\mathcal{A}$. What is the best function $\Psi: \mathscr{A} \times \mathbb{N} \times \mathbb{N} \to \mathcal{A}^+$  satisfying the following property: If $[a_{j,k}]_{1\leq j \leq m, 1\leq k \leq n}\in \mathbb{M}_{m\times n}(\mathcal{A})$ satisfy 
 		\begin{align*}
 		\left\langle \sum_{j=1}^m \sum_{k=1}^na_{j,k}s_jt_k, \sum_{p=1}^m \sum_{q=1}^na_{p,q}s_pt_q\right\rangle \leq &1, \quad  \forall s_j , t_k \in \mathcal{A},\\
 		& s_js_j^*=s_j^*s_j=1, \forall 1\leq j\leq m, t_kt_k^*=t_k^*t_k=1, \forall 1\leq k\leq n,
 	\end{align*}
 	then 
 	\begin{align*}
 		\left\langle \sum_{j=1}^m \sum_{k=1}^na_{j,k}\langle u_j, v_k\rangle, \sum_{p=1}^m \sum_{q=1}^na_{p,q}\langle u_p, v_q\rangle\right\rangle 	\leq &\Psi(\mathcal{A}, m, n), \quad  \forall u_j , v_k \in \mathcal{E},\\
 		& \langle u_j, u_j\rangle= 1, \forall 1\leq j\leq m, \langle v_k, v_k\rangle =1, \forall 1\leq k\leq n.
 	\end{align*}
 In particular, whether $\Psi$ depends on $m$ and $n$?}
 \end{problem}
 \begin{problem}
	\textbf{(Modular Grothendieck Inequality Problem - 2) 	Let $\mathscr{A}$ be the set of all unital C*-algebras. Let $\mathcal{E}$ be a Hilbert C*-module over a unital C*-algebra $\mathcal{A}$. Let $\mathcal{A}^+$ be the set of all positive elemnts in $\mathcal{A}$. What is the best function $\Psi: \mathscr{A} \times \mathbb{N} \times \mathbb{N} \to \mathcal{A}^+$  satisfying the following property: If $[a_{j,k}]_{1\leq j \leq m, 1\leq k \leq n}\in \mathbb{M}_{m\times n}(\mathcal{A})$ satisfy 
		\begin{align*}
			\left\langle \sum_{j=1}^m \sum_{k=1}^na_{j,k}s_jt_k, \sum_{p=1}^m \sum_{q=1}^na_{p,q}s_pt_q\right\rangle \leq 1, \quad  \forall s_j , t_k \in \mathcal{A}, \|s_j\|\leq 1, \forall 1\leq j\leq m, \|t_k\|\leq 1, \forall 1\leq k\leq n,
		\end{align*}
		then 
		\begin{align*}
			\left\langle \sum_{j=1}^m \sum_{k=1}^na_{j,k}\langle u_j, v_k\rangle, \sum_{p=1}^m \sum_{q=1}^na_{p,q}\langle u_p, v_q\rangle\right\rangle 	&\leq \Psi(\mathcal{A}, m, n), \quad  \forall u_j , v_k \in \mathcal{E},\\
			&\|u_j\|\leq  1, \forall 1\leq j\leq m, \|v_k\|\leq 1, \forall 1\leq k\leq n.
		\end{align*}
		In particular, whether $\Psi$ depends on $m$ and $n$?}
\end{problem}
 We believe strongly that $\Psi$ depends on $\mathcal{A}$.
  \begin{remark}
  	\textbf{Modular Bourgain-Tzafriri restricted invertibility conjecture} and \textbf{Modular Johnson-Lindenstrauss flattening conjecture}  have been stated in \cite{KRISHNA, KRISHNA2}.
  \end{remark}

 \bibliographystyle{plain}
 \bibliography{reference.bib}

\end{document}